\newtheorem{theorem}{Theorem}[section]
\newtheorem{corollary}[theorem]{Corollary}
\theoremstyle{definition}
\theoremstyle{remark}
\newtheorem{remark}[theorem]{Remark}
\numberwithin{equation}{section}
\newcommand{\ttt}{\mathbb T}
\newcommand{\ccc}{\mathcal{C}}
\newcommand{\bmo}{\operatorname{BMO}}
\newcommand{\vmo}{\operatorname{VMO}}
\keywords{BMO spaces, VMO spaces, Carleson measures, Poisson kernel}
\begin{document}

% \title[short text for running head]{full title}
\title[Decomposition and characterization of VMO]{Decomposition and characterization of VMO via vanishing Carleson measures}

% Only \author and \address are required; other information is
% optional. Remove any unused author tags.

% author one information
% \author[short version for running head]{name for top of paper}

\author[F. Tao]{Fei Tao \textsuperscript{$a,*$}} 
\address{$a.$ Beijing International Center for Mathematical Research, Peking University, Beijing 100871, P. R. China} 
\email{ferrytau@pku.edu.cn} 
\footnote{*: Corresponding author.}

\author[Y. Yang]{Yaosong Yang \textsuperscript{$b,c$}}
\address{$b.$ Academy of Mathematics and Systems Science, Chinese Academy of Sciences, Beijing 100049, P. R. China}
\address{$c.$ School of Mathematical Sciences, University of Chinese Academy of Sciences, Beijing 100049, P. R. China}
\email{yangyaosong@amss.ac.cn}

\thanks{Research supported by the National Key R \& D Program of China (2025YFA1017500)}

% \subjclass is required.
\subjclass[2020]{30H35, 30H05, 42B35, 30J99}

\date{}

\dedicatory{}
% "Communicated by" -- provide editor's name; required.
\commby{}

% Abstract is required.
\begin{abstract}
% We establish two equivalent characterizations of $\mathrm{VMO}$ in terms of vanishing Carleson measures. 
% The first describes $\mathrm{VMO}$ functions via a decomposition into a continuous boundary term and an integral operator associated with a vanishing Carleson measure. 
% The second characterizes $\mathrm{VMO}$ through the boundary values of smooth functions whose gradients induce vanishing Carleson measures, following an idea from Varopoulos's study of the $\bar{\partial}$-equation.

We establish two equivalent characterizations of $\mathrm{VMO}$ in terms of vanishing Carleson measures. First, we show that any VMO function admits a decomposition into a continuous boundary term and an integral operator associated with a vanishing Carleson measure. Second, motivated by Varopoulos's work on the $\bar{\partial}$-equation, we characterize VMO via the boundary values of smooth functions whose gradients induce vanishing Carleson measures.

As a consequence, we recover the known representation
\[
\mathrm{VMO}=\mathrm{VLO}-\mathrm{VLO},
\]
thereby providing a new perspective on this decomposition.
\end{abstract}

\maketitle

\section{Introduction}
The space $\mathrm{BMO}$, introduced by John and Nirenberg \cite{john1961functions}, is a Banach space modulo constants. The duality between the Hardy space $H^1$ and $\mathrm{BMO}$ leads to the Fefferman--Stein decomposition \cite{stein1972hp}: for any $f\in \mathrm{BMO}(\mathbb{R}^n)$, there exist functions $f_j \in L^{\infty}(\mathbb{R}^n)$ such that \[f(x)=f_0(x)+\sum_{j=1}^n R_j(f_j)(x),\] where $R_j$ denotes the $j$-th Riesz transform. Carleson \cite{carleson1976two} later posed the question of finding a constructive proof of this decomposition. For $n=1$, Jones \cite{jones1980carleson} provided such a construction using complex function theory, while subsequently Uchiyama \cite{Uchiyama82Acta} gave an explicit construction in the general case. 

Another viewpoint arises from the connection with Carleson measures. Carleson \cite{carleson1976two} showed that if $f \in \mathrm{BMO}(\mathbb{R}^n)$ has compact support, then it admits a representation
\[f(x)=f_0(x)+\int_{\mathbb{R}^n \times(0, \infty)} P_t(x-y)\, d \mu(y,t),\] where $f_0 \in L^{\infty}(\mathbb{R}^n)$, $\mu$ is a Carleson measure, and $P_t$ denotes the Poisson kernel. 
This characterizes BMO functions via the balayage of Carleson measures (see the definition below).

In the study of $\bar{\partial} $-equation associated with the Corona problem, Varopoulos \cite{varopoulos1977bmo,varopoulos1978remark} showed that for every 
$f\in \bmo(\mathbb{R}^n)$ there exists a smooth function $F$ on 
$\mathbb{R}^{n+1}_{+}$ such that 
$|\nabla F(x,y)|\,dx\,dy \in CM(\mathbb{R}^{n+1}_{+})$
and the boundary values of $F$ differ from $f$ by a bounded function.

It is natural to ask whether similar decompositions and representations hold for $\mathrm{VMO}$, the closed subspace of $\mathrm{BMO}$ introduced by Sarason \cite{sarason1975functions}. 
Sarason established a vanishing analogue of the Fefferman--Stein decomposition, relating $\mathrm{VMO}$ to the space of bounded uniformly continuous functions $\mathrm{BUC}(\mathbb{R})$. 
However, a direct characterization of $\mathrm{VMO}$ via the balayage of vanishing Carleson measures, or via Varopoulos' approach, does not seem to be available in the literature. The aim of this paper is to provide such characterizations of $\mathrm{VMO}$ in terms of vanishing Carleson measures.

To avoid compact support assumptions, we work with the unit disk $\Delta$
and the unit circle $\mathbb{T}$.

For a finite signed measure $\mu$ on $\Delta$, the \emph{balayage} of $\mu$ is defined as
\[S\mu(\zeta)=\int_{\Delta}P_{z}(\zeta)\, d\mu(z), \ \ \zeta\in \mathbb{T},\]
where $P_{z}(\zeta)$ is the Poisson kernel.

The main theorem is stated as follows:
\begin{theorem}\label{vmo}
	Let $f\in L^{1}(\mathbb{T})$. The following three statements are equivalent:
    \begin{enumerate}
        \item[$(V_1)$] $f$ lies in $\mathrm{VMO}(\mathbb{T});$
        \item[$(V_2)$] there exists a vanishing Carleson measure $\mu \in CM_0(\Delta)$ and a continuous function $g\in C(\mathbb{T})$ such that 
\[f(\zeta)=g(\zeta)+S\mu(\zeta),\ \zeta\in \mathbb{T};\]
        \item[$(V_3)$] there exists a smooth function $F\in C^{\infty}(\Delta)$ satisfying the following conditions:
\begin{enumerate}[\rm(i)]
	\item $\lim_{r\to 1}F(re^{i\theta})-f(e^{i\theta})\in C(\ttt)${\rm;}
	\item $|\nabla F|dxdy\in CM_0(\Delta)$.
\end{enumerate}
%where $C(\ttt)$ is the space of continuous functions on $\mathbb{T}$.
    \end{enumerate}
\end{theorem}
    Note that $(V_2)$ in Theorem~\ref{vmo} also allows us to reestablish the result of Korey \cite{MR1840427}, which states that every $\vmo$ function on $\mathbb{T}$ can be represented as the difference of two $\mathrm{VLO}$ functions. (See Corollary~\ref{VLO} for details.)

\medskip

The paper is organized as follows. Section~\ref{pre} introduces the necessary definitions and recalls some preliminary results. 
In Section~\ref{V1toV2}, we use an iteration argument to prove $V_1 \Rightarrow V_2$ in Theorem~\ref{vmo}. 
Section~\ref{V2toV3} establishes $V_2 \Rightarrow V_3$ by an explicit construction. Finally, Section~\ref{V3toV1} proves $V_3 \Rightarrow V_1$ and recovers a result of Korey.

\section{Preliminaries}\label{pre}

An integrable function $f$ on $\mathbb{T}$ is said to have \emph{bounded mean oscillation} (BMO), denoted by $f\in \mathrm{BMO}(\mathbb{T})$, if
\[
\|f\|_{*}\coloneqq\sup_{I}\frac{1}{|I|}\int_I |f(x)-f_{I}|\,dx < \infty,
\]
where the supremum is taken over all intervals $I\subset \mathbb{T}$, and $f_{I}=\tfrac{1}{|I|}\int_{I}f(x)dx$ is the mean value of $f$ over $I$. Note that $f_I$ can be replaced by any constant depending on $I$
(see \cite{girela2001analytic,garnett2007bounded}). A function $f$ is said to be in $\mathrm{VMO}(\mathbb{T})$ if $f\in \mathrm{BMO}(\mathbb{T})$ and 
\[
\lim_{|I|\to 0}\frac{1}{|I|}\int_I |f(x)-f_{I}|\,dx=0
\]
uniformly over intervals $I$.

Recall that a measure $\mu$ on the unit disk $\Delta$ is a \emph{Carleson measure}, denoted by $\mu\in CM(\Delta)$, if there exists a constant $C>0$ such that $\sup_{I} |\mu|(\tilde{I})/|I| < C$, where $\tilde{I}$ is the Carleson square
\[ \tilde{I}\coloneqq \{r\zeta: \zeta\in I,\  1-|I|/2\pi\leq r\leq 1\}.\]
The infimum of such constants $C$ is called the {\emph{Carleson norm} of $\mu$, denoted by $\Vert\cdot\Vert_{\mathcal{C}}$. Furthermore, if $\lim_{|I|\to 0} |\mu|(\tilde{I})/|I|=0$, we call $\mu$ is a \emph{vanishing Carleson measure}, denoted by $\mu \in CM_{0}(\Delta)$.

By Fubini's theorem,
\[\int_{\ttt}|S\mu(\zeta)||d\zeta|\leq \int_{\Delta}d|\mu|(z) \eqqcolon\left\|\mu\right\|.\]
In particular, if $\mu$ is a Carleson measure, $S\mu\in \bmo$ and the norm of $S\mu$ can be bounded by the Carleson norm of $\mu$ as follows.
\begin{theorem}[Theorem 1.6, Chapter VI, \cite{garnett2007bounded}]\label{sbmo}
	Let $\mu$ be a Carleson measure on $\Delta$. Then $S\mu\in \bmo(\ttt)$ and 
\[\left\|S\mu\right\|_{*}\leq C\left\|\mu\right\|_{\ccc}\]
for some universal constant $C$.
\end{theorem}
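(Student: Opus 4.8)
I will verify the defining inequality of $\bmo(\sss)$ directly for $S\mu$, via the classical splitting of $\mu$ into a part near each arc and a part far from it. It is convenient to work with the formulation of the Carleson condition in which Carleson squares are replaced by the sets $\Delta\cap B(\zeta_0,r)$, $\zeta_0\in\sss$, which is equivalent to the one above (this is the remark made in this section). In particular, $r=2$ gives $\Delta\cap B(\zeta_0,2)=\Delta$, so a Carleson measure is finite with $|\mu|(\Delta)\le C\|\mu\|_{\ccc}$; hence the balayage $S\mu$ is well defined, $\int_{\sss}|S\mu|\,|d\zeta|\le|\mu|(\Delta)$ (the Fubini bound from the introduction) is available, and, writing $\mu=\mu^{+}-\mu^{-}$ with each part Carleson of norm $\le\|\mu\|_{\ccc}$, I may assume $\mu\ge0$. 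Fix now an arc $I\subset\sss$ with center $\zeta_0=e^{i\theta_c}$ and length $\delta=|I|$, put $B_k=\Delta\cap B(\zeta_0,2^k\delta)$ for $k\ge0$ (so $B_k=\Delta$ once $2^k\delta\ge2$), $T_k=B_k\setminus B_{k-1}$ for $k\ge1$, and split $\mu=\mu_1+\mu_2$ with $\mu_1=\mu|_{B_1}$. For the local part the Carleson condition on $B(\zeta_0,2\delta)$ gives $\mu_1(\Delta)=\mu(B_1)\le 2\delta\|\mu\|_{\ccc}$, so
\[
\frac1{|I|}\int_I|S\mu_1(\zeta)|\,|d\zeta|\le\frac1{\delta}\int_{\sss}|S\mu_1|\,|d\zeta|\le\frac1{\delta}\,\mu_1(\Delta)\le 2\|\mu\|_{\ccc}.
\]

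For the far part I take the subtracted constant $c_I=S\mu_2(\zeta_0)=\int_\Delta P_z(\zeta_0)\,d\mu_2(z)$. Here it is useful to note that for $|w|=1$ one has $|1-\bar w z|=|w-z|$, hence $P_z(w)=\tfrac1{2\pi}(1-|z|^2)/|w-z|^2$; since $|z-\zeta_0|\ge2\delta$ throughout the support of $\mu_2$, $P_z(\zeta_0)\le \tfrac1{2\pi}\cdot 2(1-|z|)/(2\delta)^2$, and as $\int_\Delta(1-|z|)\,d\mu\le\mu(\Delta)<\infty$ the integral defining $c_I$ converges absolutely; the same bound shows $S\mu_2(\zeta)$ is finite for every $\zeta\in I$, so that $S\mu=S\mu_1+S\mu_2$ a.e. The crux is the oscillation estimate for $S\mu_2$ on $I$. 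From $P_z(w)=\tfrac1{2\pi}(1-|z|^2)/|w-z|^2$ one gets $|\nabla_w P_z(w)|\le\tfrac1\pi(1-|z|^2)/|w-z|^3\le\tfrac2\pi(1-|z|)/|w-z|^3$. If $z\in T_k$ then $|z-\zeta_0|\ge2^{k-1}\delta$ and $1-|z|\le|z-\zeta_0|\le2^k\delta$; since any $w$ on the short arc from $\zeta$ to $\zeta_0$ (with $\zeta\in I$) satisfies $|w-\zeta_0|\le\delta/2$, we obtain $|w-z|\ge \tfrac14\,2^k\delta$, and therefore $|\nabla_wP_z(w)|\le C/(2^k\delta)^2$, whence $|P_z(\zeta)-P_z(\zeta_0)|\le C\delta/(2^k\delta)^2=C\,2^{-2k}/\delta$ for $\zeta\in I$. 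Summing over the dyadic shells and using $\mu(T_k)\le\mu(B_k)\le 2^k\delta\|\mu\|_{\ccc}$ (valid also when $B_k=\Delta$) gives, for every $\zeta\in I$,
\[
|S\mu_2(\zeta)-c_I|\le\sum_{k\ge1}\int_{T_k}|P_z(\zeta)-P_z(\zeta_0)|\,d\mu(z)\le C\|\mu\|_{\ccc}\sum_{k\ge1}2^{-k}=C\|\mu\|_{\ccc}.
\]

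Adding the two estimates yields $\tfrac1{|I|}\int_I|S\mu(\zeta)-c_I|\,|d\zeta|\le C\|\mu\|_{\ccc}$ with $C$ independent of the arc $I$; taking the infimum over constants and the supremum over arcs gives $\|S\mu\|_*\le C\|\mu\|_{\ccc}$, which is the assertion. The only genuinely delicate point is the gradient bound for the Poisson kernel together with the geometric bookkeeping on the shell $T_k$ — namely that there $1-|z|$ and the arc length $\delta$ are both controlled by $2^k\delta$ while $|w-z|$ is bounded below by a fixed multiple of $2^k\delta$; once these are in place the dyadic series converges geometrically and produces the universal constant. (The passage to a finite positive $\mu$ and the convergence of the integrals defining $c_I$ are routine, as indicated above.)
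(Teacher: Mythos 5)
Your argument is correct and is essentially the classical proof: split $\mu$ into a local piece handled by the Fubini bound $\int_{\sss}|S\mu_1|\le\mu_1(\Delta)$ and a far piece handled on dyadic shells via the gradient estimate for the Poisson kernel, then sum the geometric series against the Carleson condition. This is the same scheme the paper itself reproduces (in the ball/Carleson-square formulation with intervals $I_k$ of length $2^kh$) when proving the companion Proposition \ref{vmo1}, which it describes as a slight modification of the proof of Theorem \ref{sbmo}; no gaps.
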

Conversely, every $f\in\bmo(\ttt)$ admits such a representation
up to an additive bounded function.
\begin{theorem}[\cite{carleson1976two,uchiyama1980remark} ]\label{sbmoi}
	Let $f\in \bmo(\ttt)$. There exists a Carleson measure $\mu \in CM(\Delta)$ and a function $g\in L^{\infty}(\ttt)$ such that 
\[f(\zeta)=g(\zeta)+S\mu(\zeta),\ \zeta=e^{i\theta}\in \ttt.\]
\end{theorem}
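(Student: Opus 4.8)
The plan is to adapt Carleson's constructive argument (Theorem~\ref{carleson}) to the circle, exploiting the fact that compactness of $\sss$ removes both the compact-support hypothesis on $\varphi$ and the decay hypothesis on the mollifier at infinity. The key point is a choice of mollifier: if the family of bump functions used in Carleson's scheme is taken to be the disk Poisson kernels, i.e. at scale $t$ about the boundary point $e^{i\psi}$ one uses $\zeta\mapsto P_{(1-t)e^{i\psi}}(\zeta)$ (equivalently, up to lower-order errors, the dilated profile $\tfrac{1}{\pi}(1+x^{2})^{-1}$ in local coordinates), then each term $\int h_{t(\psi)}(\theta-\psi)\,b(\psi)\,d\psi$ occurring in the representation is, by definition, the balayage $S\nu(e^{i\theta})$ of the measure $\nu$ obtained by transporting $b(\psi)\,d\psi$ into $\Delta$ along $\psi\mapsto(1-t(\psi))e^{i\psi}$. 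Thus Carleson's decomposition is already a balayage decomposition, and the task reduces to identifying the measure and verifying the Carleson condition.

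\emph{Step 1 (construction on $\sss$).} Re-running Carleson's iterative scheme on dyadic arcs of $\sss$ --- at each stage extracting, by a Calder\'on--Zygmund / stopping-time argument applied to the current remainder, a bounded correction that lowers the $\bmo$ norm by a fixed factor, and summing the resulting geometric series --- produces $b_{0}\in L^{\infty}(\sss)$, functions $b_{i}$ on $[0,2\pi]$ with $\sum_{i\ge1}\|b_{i}\|_{\infty}\le C\|f\|_{*}$, and heights $t_{i}(\psi)>0$ such that
\[
f(e^{i\theta})=b_{0}(e^{i\theta})+\sum_{i\ge1}\int_{0}^{2\pi}h_{t_{i}(\psi)}(\theta-\psi)\,b_{i}(\psi)\,d\psi .
\]
Because $\sss$ is compact and the Poisson kernel has all the size and smoothness properties the scheme requires, no truncation of $f$ and no hypothesis at infinity enter --- this is precisely where passing to the compact model is used.

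\emph{Step 2 (assembling $\mu$ and the Carleson bound).} For each $i$ let $\nu_{i}$ be the push-forward of $b_{i}(\psi)\,d\psi$ under $\psi\mapsto(1-t_{i}(\psi))e^{i\psi}$, a finite signed measure carried by the graph $\Gamma_{i}=\{(1-t_{i}(\psi))e^{i\psi}:\psi\in[0,2\pi]\}\subset\Delta$ with $\|\nu_{i}\|\le2\pi\|b_{i}\|_{\infty}$. Since $\sum_{i}\|b_{i}\|_{\infty}<\infty$, the series $\mu:=\sum_{i\ge1}\nu_{i}$ converges in total variation to a finite signed measure, and $|\mu|(\sss)\le\sum_{i}|\nu_{i}|(\sss)=0$ because each $\Gamma_{i}$ lies in the open disk; hence $\mu$ is supported in $\Delta$. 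A Fubini argument, legitimate because $\sum_{i}\int_{\sss}S|\nu_{i}|(\zeta)\,|d\zeta|\le\sum_{i}\|\nu_{i}\|<\infty$, gives $S\mu=\sum_{i}S\nu_{i}$ in $L^{1}(\sss)$, and together with Step~1 and the identity $\int_{0}^{2\pi}h_{t_{i}(\psi)}(\theta-\psi)\,b_{i}(\psi)\,d\psi=S\nu_{i}(e^{i\theta})$ this yields $f=g+S\mu$ with $g:=b_{0}\in L^{\infty}(\sss)$. Finally, for an arc $I$ and its Carleson square $\widetilde I=\{re^{i\psi}:\psi\in I,\ 1-|I|/2\pi<r<1\}$, a point $(1-t_{i}(\psi))e^{i\psi}$ can lie in $\widetilde I$ only if $\psi\in I$ and $t_{i}(\psi)<|I|/2\pi$, so $|\nu_{i}|(\widetilde I)\le\int_{I}|b_{i}(\psi)|\,d\psi\le\|b_{i}\|_{\infty}|I|$; summing in $i$, $|\mu|(\widetilde I)\le\bigl(\sum_{i}\|b_{i}\|_{\infty}\bigr)|I|\le C\|f\|_{*}|I|$, i.e. $\mu\in CM(\Delta)$ with $\|\mu\|_{\ccc}\le C\|f\|_{*}$. (The converse inclusion is Theorem~\ref{sbmo} together with $L^{\infty}(\sss)\subset\bmo(\sss)$.)

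The main obstacle is Step~1: one must check that Carleson's stopping-time/iteration argument goes through with only the obvious changes on $\sss$ with a Poisson-type mollifier, and that the total mass $\sum_{i}\|b_{i}\|_{\infty}$ stays controlled by $\|f\|_{*}$ with no loss from the absence of compact support. If one prefers to quote Theorem~\ref{carleson} as a black box instead, an alternative is to write $f=\chi f+(1-\chi)f$ with $\chi$ a smooth bump supported on an arc, transport each piece to $\rr$ through a M\"obius chart (under which $\bmo$ and the Carleson condition are preserved, and $\chi f$, $(1-\chi)f$ become compactly supported), apply Theorem~\ref{carleson} to each, and transport the resulting balayage representations back, keeping track of the conformal Jacobian in $P_{z}(\zeta)$. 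Either route leaves Step~2, and hence the Carleson estimate, routine.
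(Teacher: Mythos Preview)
The paper does not prove this theorem; it is quoted from \cite{carleson1976two,uchiyama1980remark} and accompanied only by Remark~\ref{rmk1}, which records the \emph{output} of Carleson's construction: a bounded $g$ and a measure $\mu$ living on a prescribed sequence of concentric circles $\{|z|=r_n\}$, with density layers $h_n\in L^\infty(r_n\sss)$ satisfying $\|g\|_\infty+\sum_n\|h_n\|_\infty\le C\|f\|_*$. So there is no in-paper argument to compare against beyond this structural description.

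Your proposal follows the same constructive philosophy the Remark attributes to Carleson, but your packaging of $\mu$ is different: you allow the height $t_i(\psi)$ to vary with $\psi$, so each $\nu_i$ sits on a graph rather than on a circle. This is harmless for the statement as written, but note that the paper's subsequent proof of Theorem~\ref{vmo} \emph{uses} the concentric-circle structure of Remark~\ref{rmk1} (the sums over $\{n:r_n\ge 1-|I|/2\pi\}$), so if you intend your version to feed into that argument you would have to redo those estimates.

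The substantive issue is the one you flag yourself: Step~1 is asserted, not carried out. Saying ``re-running Carleson's iterative scheme on dyadic arcs'' is not a proof, and your own closing paragraph concedes this is the main obstacle. Two specific points would need attention: (a) the disk Poisson kernel $P_{(1-t)e^{i\psi}}$ is only \emph{approximately} a dilate of a fixed profile---the identification with $\tfrac1\pi(1+x^2)^{-1}$ is asymptotic as $t\to0$---so one must check that the size and smoothness estimates Carleson's stopping-time argument actually uses survive the curvature corrections uniformly in $t$; (b) the alternative route via two M\"obius charts is plausible but the bookkeeping (how the balayage kernel transforms, why the two local Carleson measures glue to a global one with the right norm) is not done. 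Either route is viable, but as written Step~1 is a gap rather than a proof.
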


\begin{remark}\label{rmk1}
This theorem follows directly from the duality between the real Banach space $H^1$ and $\bmo$. The duality between $H^1$ and $\bmo$ can also be derived from this decomposition of $\bmo$. A constructive proof of such a decomposition was studied by Carleson \cite{carleson1976two} and by Fefferman (unpublished; see page 263 in \cite{garnett2007bounded}). The construction is described as follows: for any given increasing sequence $\{r_n\} \nearrow 1$, there exist $h_n\in L^{\infty}(r_n\ttt)$, $n=1,2,\ldots$ such that 
\[\mu(z)=\sum_{n=1}^{\infty}\delta_{r_n}(r)h_n(r_{n}e^{i\varphi}),\ z=re^{i\varphi},\]
where $\delta_{r_n}$ are the Dirac measures supported on the point $r_n$. Furthermore, we have $\left\|g\right\|_{\infty}+\sum_{n=1}^{\infty}\left\|h_{n}\right\|_{\infty}\leq C\left\|f\right\|_{*}$ (see page 152 in \cite{stein1972hp}), and the constant $C$ is independent of the function $f$.
\end{remark}

\section{Proof of $V_1 \Rightarrow V_2$}\label{V1toV2}
Since $f\in \vmo(\mathbb T)\subset\bmo(\mathbb T)$, Theorem~\ref{sbmoi}
ensures the existence of
$g_1 \in L^{\infty}(\mathbb{T})$ and a measure $\mu_{1} \in CM(\Delta)$ such that \[f(\zeta) = g_{1}(\zeta) + S\mu_{1}(\zeta).\]
By Remark~\ref{rmk1}, the measure $\mu_{1}$ admits the representation
\[
    \mu_{1}(z) = \sum_{n=1}^{\infty} \delta_{r_n}(r)\, h_n^{(1)}(r_{n}e^{i\varphi}), 
    \quad h_n^{(1)} \in L^{\infty}(r_n\mathbb{T}),\ z=re^{i\varphi}.
\]
Moreover, the uniform estimate
\[
    \|g_1\|_{\infty} + \sum_{n=1}^{\infty}\|h_{n}^{(1)}\|_{\infty} \le C\|f\|_{*}\] holds, where $C$ is a positive constant independent of $f$.

It follows from \cite[Theorem 5.1, Chapter VI]{garnett2007bounded} that there exists $r^{(1)}>0$ such that $\|f-P_{r^{(1)}}*f\|_{*}<\tfrac{\|f\|_{*}}{2}$. Using the linearity of convolution,
\[P_{r^{(1)}}*f
= P_{r^{(1)}}*g_1 + P_{r^{(1)}}*S\mu_1
\eqqcolon \tilde g_1 + S\tilde\mu_1,
\]
where \[
\tilde{\mu}_1(z)
= \sum_{n=1}^{\infty}\delta_{r_n}(r)\tilde h_n^{(1)}(r_ne^{i\varphi}), \quad\tilde h_n^{(1)} = P_{r^{(1)}}*h_n^{(1)}.\]

Since $g_1\in L^\infty(\mathbb T)$ and $h_n^{(1)}\in L^\infty(r_n\mathbb T)$,
convolution with $P_{r^{(1)}}$ yields $\tilde g_1\in C(\mathbb T)$ and
$\tilde h_n^{(1)}\in C(r_n\mathbb T)$. In addition,
\begin{align}\label{hn1}
    \|\tilde g_1\|_\infty+\sum_{n=1}^\infty\|\tilde h_n^{(1)}\|_\infty
\le
\|g_1\|_\infty+\sum_{n=1}^\infty\|h_n^{(1)}\|_\infty
\le C\|f\|_* .
\end{align}

The interchange of the order of integration below is justified by the uniform convergence of the series.

Set $f^{(1)} = f - P_{r^{(1)}} * f$, so that $ \|f^{(1)}\|_{*} \le \tfrac{1}{2} \|f\|_{*}$. We claim that $f^{(1)}\in\vmo(\mathbb T)$. 
Represent the mean integral of $f^{(1)}$ as follows,
{\small\begin{align*}
	f_{I}^{(1)}&=\frac{1}{|I|}\int_{I}f^{(1)}(e^{i\theta})\, d\theta=\frac{1}{|I|}\int_{I}\left(\int_{0}^{2\pi}\left[f(e^{i\theta})-f(e^{i(\theta-\varphi)})\right]P_{r^{(1)}}(\varphi)\, d\varphi\right)\, d\theta\\
    &=\int_{0}^{2\pi}P_{r^{(1)}}(\varphi)(f-f_{\varphi})_{I}\,d\varphi,
\end{align*}}
where $f_\varphi=f(e^{i(\theta-\varphi)})$ denotes the translation of $f(e^{i\theta})$. Then for any $I$ with sufficiently small arclength $\delta$, it follows that
{\small \begin{align*}
	&\frac{1}{|I|} \int_I\left|f^{(1)}(e^{i \theta})-f_I^{(1)}\right| \,d \theta\\
	=&\frac{1}{|I|} \int_I \left| \int_0^{2 \pi} P_{r^{(1)}}(\varphi) \left(f(e^{i \theta})-f_\varphi\right)d \varphi-\int_0^{2 \pi} P_{r^{(1)}}(\varphi)\left(f-f_{\varphi}\right)_I d \varphi\right| d\theta\\
	=& \frac{1}{|I|} \int_I\left|\int_0^{2 \pi} P_{r^{(1)}}(\varphi)\left[\left(f(e^{i \theta})-f_I\right)-\left(f_{\varphi}-\left(f_{\varphi}\right)_I\right)\right] \,d \varphi\right| \,d \theta.\\
    \leq & \int_0^{2 \pi} P_{r^{(1)}}(\varphi) \left( 
        \frac{1}{|I|} \int_I \big| f(e^{i \theta}) - f_I \big| \, d\theta 
        + \frac{1}{|I|} \int_I \big| f_{\varphi} - (f_{\varphi})_I \big| \, d\theta 
      \right) d\varphi \\
    \leq & 2 \, \sup_{|I| = \delta} \frac{1}{|I|} \int_I |f - f_I| \, d\theta.
\end{align*}}
Letting $\delta \to 0$ and using the assumption that $f \in \vmo(\mathbb{T})$, we obtain $f^{(1)}$ is in $\vmo(\mathbb{T})$. 

Next, we show that $\tilde{\mu}_{1}\in CM(\Delta)$. 
For any Carleson square $\tilde{I}$, we have
\begin{align}\label{carleson measure}
    \frac{1}{|I|} \int_{\tilde{I}} d|\tilde{\mu}_1|(z)
    &\leq \frac{1}{|I|} \int_I \sum_{\{n : r_n \ge 1 - |I|/2\pi\}}^\infty 
        \big| \tilde{h}_n^{(1)}(r_n e^{i\varphi}) \big| \, d\varphi \notag\\
    &\leq \sum_{\{n : r_n \ge 1 - |I|/2\pi\}}^\infty \| \tilde{h}_n^{(1)} \|_\infty
    \le \sum_{\{n : r_n \ge 1 - |I|/2\pi\}}^\infty \| h_n^{(1)} \|_\infty\le C\|f^{(1)}\|_*.
\end{align}
Hence, $\tilde{\mu}_{1}$ defines a Carleson measure. Therefore, we conclude that 
\[f=f^{(1)}+\tilde{g_1}+S\tilde{\mu_1}, 
\]
where $ f^{(1)}\in \vmo(\mathbb{T}),\, \tilde{g_1} \in C(\ttt)$, and $\tilde{\mu}_{1}\in CM(\Delta)$.

By applying the same argument to $f^{(1)}$, there exists $r^{(2)} > 0$ such that \[
    \| f^{(1)} - P_{r^{(2)}} * f^{(1)} \|_* 
    \leq \tfrac{\| f^{(1)} \|_*}{2} \leq \tfrac{\| f \|_*}{4}.\]

There exist a function $g_2 \in L^{\infty}(\mathbb{T})$ and a measure
\[
    \mu_2(z) = \sum_{n=1}^{\infty} \delta_{r_n}(r) \, h_n^{(2)}(r_n e^{i\varphi}), 
    \quad h_n^{(2)} \in L^{\infty}(r_n \mathbb{T}),
\]
such that
\[
    f^{(1)} = g_2 + S \mu_2,
    \quad 
    \|g_2\|_\infty + \sum_{n=1}^{\infty} \| h_n^{(2)} \|_\infty 
    \le C \| f^{(1)} \|_* \le \frac{C}{2} \| f \|_*.
\]

As before, setting $\tilde{g}_2 = P_{r^{(2)}} * g_2$ and $\tilde{\mu}_2 = P_{r^{(2)}} * \mu_2$, we obtain
\begin{align}\label{hn2}
        \|\tilde{g}_2\|_\infty + \sum_{n=1}^{\infty} \|\tilde{h}_n^{(2)}\|_\infty
    \leq \|g_2\|_\infty + \sum_{n=1}^{\infty} \| h_n^{(2)} \|_\infty 
    \leq \frac{C}{2} \| f \|_*.
\end{align}

Similarly, we let $f^{(2)} = f^{(1)} - P_{r^{(2)}} * f^{(1)}$ with $\|f^{(2)}\|_{*} \le \tfrac{1}{2} \|f^{(1)}\|_{*}$. Then one can show $f^{(1)}=f^{(2)}+\tilde{g_2}+S\tilde{\mu_2}$, where $f^{(2)}\in \vmo(\mathbb{T}),\, \tilde{g_2} \in C(\ttt)$, and $\tilde{\mu}_{2}\in CM(\Delta)$. 

Repeating the above procedure iteratively, we obtain the limit function $g$ and the limit measure $\mu$:
\[
    f = \sum_{k=1}^{\infty} \tilde{g}_k + \int_{\Delta} P_z(\zeta) \left( \sum_{k=1}^{\infty} \,d\tilde{\mu}_k(z) \right) \eqqcolon g + S\mu.
\]

Since each $\tilde{g}_k$ is continuous on $\mathbb{T}$ and
\[
    \Big\| \sum_{k=1}^{\infty} \tilde{g}_k \Big\|_\infty 
    \le \sum_{k=1}^{\infty} \| \tilde{g}_k \|_\infty 
    \le \sum_{k=1}^{\infty} \frac{C \|f\|_*}{2^{k-1}} 
    = 2 C \|f\|_*,
\]
the series $\sum_{k=1}^{\infty} \tilde{g}_k$ converges uniformly, yielding that $g \in C(\mathbb{T})$.

Finally, it remains to show that the limit measure $\mu$ is a vanishing Carleson measure. From the estimate \eqref{carleson measure}, it follows that
\begin{align*}
    \frac{1}{|I|} \int_{\tilde{I}} d|\mu|(z)
    &\le \frac{1}{|I|} \int_{\tilde{I}} \sum_{k=1}^{\infty} \,d|\tilde{\mu}_k|(z) \leq \sum_{k=1}^{\infty} \sum_{\{n : r_n \ge 1 - |I|/2\pi\}} \| \tilde{h}_n^{(k)} \|_\infty.
\end{align*}

Combined with \eqref{hn1} and \eqref{hn2}, we obtain
\[
    \sum_{k=1}^{\infty} \sum_{\{n : r_n \ge 1 - |I|/2\pi\}} \| \tilde{h}_n^{(k)} \|_\infty
    \le \sum_{k=1}^{\infty} \sum_{n=1}^{\infty} \| \tilde{h}_n^{(k)} \|_\infty
    \le \sum_{k=1}^{\infty} \frac{C \|f\|_*}{2^{k-1}} = 2 C \|f\|_*.
\]

Hence, by the dominated convergence theorem, we can interchange the limit and the series:
\[
    0 \le \lim_{|I|\to 0} \frac{1}{|I|} \int_{\tilde{I}} \,d|\mu|(z)
    \le \sum_{k=1}^{\infty} \lim_{|I|\to 0} \sum_{\{n : r_n \ge 1 - |I|/2\pi\}} \| \tilde{h}_n^{(k)} \|_\infty = 0,
\]
which shows that $\mu$ is indeed a vanishing Carleson measure.

\section{Proof of $V_2 \Rightarrow V_3$}\label{V2toV3}

In this section, we show that a $\vmo$ function differs from the boundary value of a certain smooth function $F$ satisfying the Carleson condition by a continuous function.
\medskip

Assume that $f$ admits a decomposition as in $(V_2)$. 
Our goal is to construct a function $F \in C^{\infty}(\Delta)$ satisfying the conditions stated in $(V_3)$. To this end, we introduce an auxiliary function:
\[
 \tilde{P}_z(u)\coloneqq P_z(e^{i\theta}) \chi_{(r, 1)}(\rho),
\]
where \[z=r e^{i \varphi} \in \Delta,\ u=\rho e^{i\theta} \in \Delta,\ 0<r, \rho<1,\]
and $\chi_{(r, 1)}$ denotes the characteristic function of the interval $(r, 1)$. 

Denote
\begin{align*}
	\nu_z(u)\coloneqq \frac{\partial \tilde{P}_z}{\partial \rho}(u),\quad \ \lambda_z(u)\coloneqq \frac{\partial \tilde{P}_z}{\partial \theta}(u).
\end{align*}
The measure $\nu_z$ is supported on the circle $\{u:|u|=r\}$ and coincides with the Lebesgue measure on this circle weighted by the Poisson kernel $P_z$, while $
\lambda_z(u)=\tfrac{\partial P_z(e^{i\theta})}{\partial \theta}\,
\chi_{(r,1)}(\rho)$ is a function. It follows from \cite[Lemmas 1.3.2, 1.3.3]{varopoulos1977bmo} that $\left\|\nu_z\right\|\leq 1$ and $\left\|\lambda_z\right\|\leq C_1$ for some positive constant $C_1$, where \[\left\|\nu_z\right\|\coloneqq \int_{u\in\Delta}d\nu_z(u) \qquad\text{and} \qquad\left\|\lambda_z\right\|\coloneqq  \int_{u\in\Delta}\lambda_z(u)d\rho \, d\theta.\] 

Let $\mu$ be the vanishing Carleson measure in $(V_2)$. Set
\begin{align*}
F(u) &=\int_{z\in\Delta} \tilde{P}_z(u) \,d \mu(z), \quad u=\rho e^{i\theta}\in\Delta, \\
g(e^{i\theta}) &=\int_{z\in\Delta} P_z(e^{i\theta})\, d|\mu|(z), \quad e^{i\theta} \in \ttt.
\end{align*}
Theorem \ref{sbmo} implies that $g(e^{i\theta})<+\infty$ for almost every $e^{i\theta} \in \mathbb{T}$. For such $e^{i\theta}$, applying the dominated convergence theorem yields
\[\lim_{\rho\to 1}F(\rho e^{i\theta})=\int_{\Delta}P_{z}(e^{i\theta})\, d\mu(z).\]
A direct computation gives
\begin{align}\label{df}
	\frac{\partial F}{\partial \rho}(u)=\int_{z\in\Delta}\nu_z(u)\, d\mu(z),\quad \frac{\partial F}{\partial \theta}(u)=\int_{z\in \Delta}\lambda_z(u)\, d\mu(z).
\end{align}

Note that $\tfrac{\partial F}{\partial \rho}(u)$ defines a measure which is singular with respect to the two-dimensional Lebesgue measure, whereas $\tfrac{\partial F}{\partial \theta}(u)$ appears as a density function with respect to $d\rho\, d\theta$.
\medskip

For the function $F$ defined above, we first show that $\tfrac{\partial F}{\partial \rho}(u)$ and $\tfrac{\partial F}{\partial \theta}(u) \,d\rho\, d \theta$ are vanishing Carleson measures. By \cite[Lemma 1.3.1]{varopoulos1977bmo}, these quantities are known to be Carleson measures. 

Let $h>0$ be sufficiently small, and we fix a closed interval $I_0 \subset \mathbb{T}$ with $|I_0|=h$. For each $m=0,\ldots, N_1$, let $I_m$ denote the interval concentric with $I_0$ of length $(2m+1)h$, where $N_1$ is the unique integer satisfying 
\[
    {(2N_1+1)h}< 2\pi\leq {(2N_1+3)h}.
\]
We further set $I_{N_1+1}$ as $\mathbb{T}$ and define
\[
    \hat{I}_m\coloneqq \{r\zeta: \zeta\in I_m,\  1-h/2\pi\leq r\leq 1\}.
\]
\begin{figure}[htp]
 \centering
 \includegraphics[width=6cm]{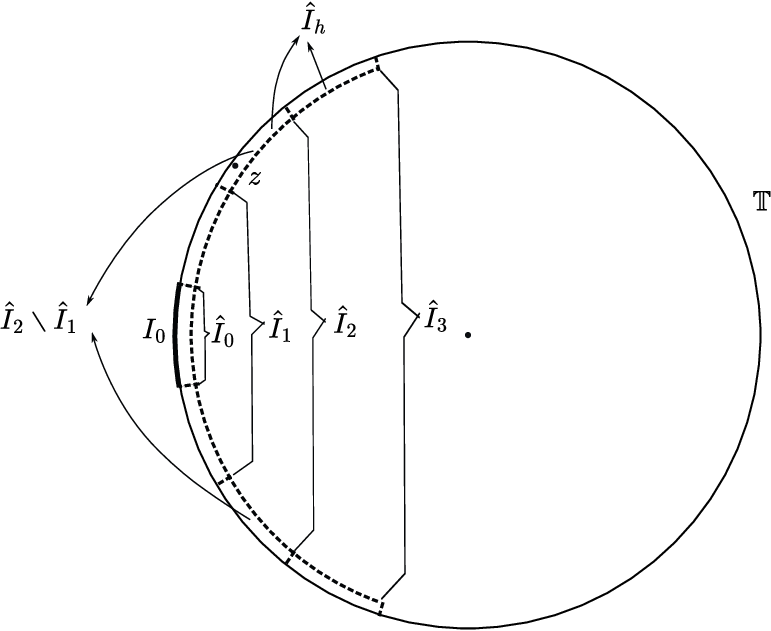}
%\captionsetup{justification=centering}
 \caption{Illustration of $\hat{I}_m$.}
 \label{hat I}
\end{figure}
Note that each $\hat{I}_m$ consists of $2m+1$ Carleson squares of side length $h$ for $m=0,\ldots, N_1$. See Fig.~\ref{hat I}. Moreover, for each $m=1,\ldots, N_1$, the difference $\hat{I}_{m}\setminus \hat{I}_{m-1}$ consists of exactly two Carleson squares of side length $h$. For simplicity, we denote any such Carleson square arising from $\hat{I}_{m}\setminus \hat{I}_{m-1}$ by $\hat{I}_h$.

A direct computation shows that there exists a universal constant $C>0$ such that
\begin{align}\label{equ:vz}
	\notag|\nu_z|(\hat{I}_0)&\leq C h \frac{1-r^2}{|z-e^{i\theta}|^2}\leq \frac{C}{m^2}, \ &\ z\in\hat{I}_{m}\setminus\hat{I}_{m-1},\ 1-r\leq h;\notag\\
	|\nu_z|(\hat{I}_0)&=0,\ &\ z=re^{i\varphi},\ 1-r> h,
\end{align}
where $m=2,\ldots,N_1+1$, \[|\nu_z|(\hat{I}_0)=\int _{u\in\hat{I}_0}d|\nu_z|(u), \quad u=\rho e^{i\theta}\in \hat{I}_0.\]

From \eqref{equ:vz} and the fact that $\|\nu_z\|\leq 1$ we conclude that
\begin{align}\label{dfdp}
\int_{u\in\hat{I}_0}\,d\left|\frac{\partial F}{\partial\rho}\right|(u) \leq&\int_{u\in\hat{I}_0}\int_{z\in \Delta}\,d|\nu_z|(u)\,d|\mu|(z)\leq\int_{z\in \Delta}|\nu_z|(\hat{I}_0)\,d|\mu|(z)\notag\\\notag
	=&\int_{\{z:|z|>1-h\}}|\nu_z|(\hat{I}_0)\,d|\mu|(z)\\
	\leq& \int_{\hat{I}_1}\left\|\nu_z\right\|\,d|\mu|(z)+\sum_{m=2}^{N_1+1}\int_{\hat{I}_{m}\setminus\hat{I}_{m-1}}\frac{C}{m^2}\,d|\mu|(z)\notag\\
	\leq& 3|\mu|(\hat{I}_h)+2C\sum_{m=1}^{\infty}\frac{|\mu|(\hat{I}_h)}{m^2}\lesssim  c_h h, 
 \end{align}
where $A\gtrsim B\, (A\lesssim B)$ means that there exists a universal constant $C$ such that $ A\geq CB \, (A\leq CB)$. Note that $\hat{I}_1$ consists of three Carleson squares of side length $h$, while for $m = 1, \ldots, N_1$, the set $\hat{I}_m \setminus \hat{I}_{m-1}$ consists of two Carleson squares of side length $h$. Moreover, $\hat{I}_{N_1+1} \setminus \hat{I}_{N_1}$ is contained in the union of two adjacent Carleson squares of side length $h$.

Since $\mu$ is a vanishing Carleson measure, it follows from \eqref{dfdp} that $\frac{\partial F}{\partial \rho}(u)$ is also a vanishing Carleson measure.
\medskip

We next consider $\tfrac{\partial F}{\partial \theta}$. 
For every $0 < h \leq 2\pi$, we recall $I_0$ is a fixed closed interval on $\mathbb{T}$ with $|I_0| = h$. Denote $N_0$ by the unique natural number such that $2^{N_0} < 2\pi \leq 2^{N_0+1}$. For $0 \leq m \leq N_0$, let $I_m$ denote the interval concentric with $I_0$ of length $2^m h$, and define $I_{N_0+1}$ as $\mathbb{T}$.
By direct computation, for $u = \rho e^{i\theta}$ we have
\begin{align}\label{dfdtheta}
\left|\tfrac{\partial F}{\partial\theta}\right|(u) \leq \int_{z\in\Delta} \left|\tfrac{\partial \tilde{P}_z(u)}{\partial \theta}\right| d|\mu|(z) 
= \int_{z \in \Delta} \left|\tfrac{\partial P_z(e^{i\theta})}{\partial \theta}\right| \chi_{(r,1)}(\rho)  d|\mu|(z),
\end{align}
and the estimate
\begin{align}\label{inequdtheta}
\left|\frac{\partial P_z(e^{i\theta})}{\partial  \theta}\right|\leq \frac{C}{|z-e^{i\theta}|^2}
\end{align}
holds for all $z=re^{i\varphi}$ and all $\theta\in [0,2\pi]$ (see\cite{varopoulos1977bmo}).

\begin{figure}[htp]
 \centering
 \includegraphics[width=6cm]{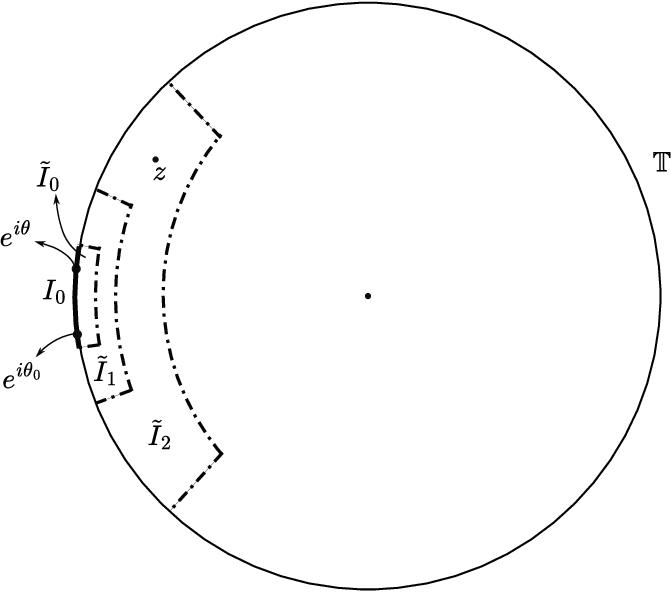}
%\captionsetup{justification=centering}
 \caption{Illustration of Carleson squares $\tilde{I}_k$.}
 \label{tilde I}
\end{figure}

Let $\tilde I_m$ be the Carleson squares defined by 
\[ \tilde{I}_m \coloneqq \{r\zeta: \zeta\in I_m,\  1-|I_m|/2\pi\leq r\leq 1\},
\] 
where $0\leq m\leq N_0+1$, see Fig~\ref{tilde I}. Let $e^{i\theta}\in I_0$, $z\in\tilde{I}_{m}\setminus\tilde{I}_{m-1}$, it follows that
\begin{align}\label{dtheta}
\left|\frac{\partial P_z(e^{i\theta})}{\partial  \theta}\right|\leq \frac{C}{|z-e^{i\theta}|^2}\lesssim \frac{1}{2^{2m}h^2}, \quad 2\leq m\leq N_0+1.
\end{align}

From \eqref{df}, \eqref{dfdtheta} and \eqref{inequdtheta}, we obtain that
\begin{align}
	\notag &\int_{u\in\tilde{I}_0}\left|\frac{\partial F}{\partial\theta}\right|(u)\,d\rho \,d \theta\notag\\
    \leq&\int_{z\in\Delta}\int_{u\in\tilde{I}_0}\left|\frac{\partial P_z(e^{i\theta})}{\partial \theta}\right|\chi_{(r,1)}(\rho)\,d\rho \,d \theta \,d|\mu|(z)\notag\\
=&\int_{z\in\tilde{I}_1}\int_{u\in\tilde{I}_0}\left|\frac{\partial P_z(e^{i\theta})}{\partial \theta}\right|\chi_{(r,1)}(\rho)\,d\rho \,d \theta\, d|\mu|(z) \notag\\
&+\sum_{m=2}^{N_0+1}\int_{z\in\tilde{I}_{m}\setminus\tilde{I}_{m-1}}\int_{u\in\tilde{I}_0}\left|\frac{\partial P_z(e^{i\theta})}{\partial \theta}\right|\chi_{(r,1)}(\rho)\,d\rho\, d \theta\, d|\mu|(z) \notag\\
	 \leq& \int_{z\in\tilde{I}_1}\left\|\lambda_z\right\|\,d|\mu|(z)+\sum_{m=2}^{N_0+1}\int_{\tilde{I}_{m}\setminus\tilde{I}_{m-1}}\int_{\tilde{I}_0}\frac{C}{|z-e^{i\theta}|^2}\,d\rho\, d \theta\, d|\mu|(z). 
\end{align}
Applying \eqref{inequdtheta}, it follows that
\begin{align*}
\int_{u\in \tilde{I}_0}\frac{1}{|z-e^{i\theta}|^2}\,d\rho \,d \theta \lesssim \frac{1}{2^{2m}}.
\end{align*}
Therefore, together with the fact that $\|\lambda_z\|\leq C_1$,
\begin{align}\label{df1}
	\int_{u\in\tilde{I}_0}\left|\frac{\partial F}{\partial\theta}\right|(u)\,d\rho\, d \theta&\lesssim C_1|\mu|(\tilde{I}_1)+ \sum_{m=2}^{N_0+1}\frac{|\mu|(\tilde{I}_m)}{2^{2m}}\notag\\
    &\lesssim C_1\left(c_{2h} h+\sum_{m=2}^{N_0+1}\frac{c_{2^mh}h}{2^m}\right).
\end{align}

It remains to show that $\sum_{m=2}^{N_0+1}\frac{c_{2^mh}}{2^{m}}\to 0$ uniformly as $h\to 0$. 

Since $\mu$ is a vanishing Carleson measure, we have $c_h\leq \|\mu\|_{\mathcal{C}}$ for all $h$ and $c_{h}\to 0$ uniformly as $h\to 0$. Let $\varepsilon>0$. Then there exists $N>0$ such that \[\sum_{m=N}^{N_0+1}\frac{c_{2^mh}}{2^{m}}\leq \sum_{m=N}^{\infty}\frac{\left\|\mu\right\|_{\ccc}}{2^m}<\varepsilon.\]

For this $\varepsilon$, we can further choose $\delta'>0$ so that whenever $2^m h<\delta'$, it follows that $c_{2^m h}<\varepsilon$. Set $\delta=\delta'2^{-N}$. Then for $m<N$ we have $2^m h<\delta'$, provided $h<\delta$. Without loss of generality, we may also assume $N<N_0+2$ (otherwise the second sum below is empty).

Now, for any arc $I_0$ with $|I_0|=h<\delta$, we obtain
\begin{align*}
	\sum_{m=2}^{N-1}\frac{c_{2^mh}}{2^m}+\sum_{m=N}^{N_0+1}\frac{c_{2^mh}}{2^m}
	\leq \ \sum_{m=2}^{\infty}\frac{\varepsilon}{2^m}+\sum_{m=N}^{\infty}\frac{\left\|\mu\right\|_{\mathcal{C}}}{2^m}< 4\varepsilon.
\end{align*}
We then conclude that $\tfrac{\partial F}{\partial \theta}(u) \,d\rho \,d \theta$ is a vanishing Carleson measure. 
\medskip

Finally, we will smooth the constructed function $F$ by truncating $P_z$ with a smooth function instead of the characteristic function:
\[\tilde{P}_z(u)=P_z(e^{i\theta}) \varphi\left(\tfrac{1-|u|}{1-|z|}\right), \ \forall\ u\in \Delta,\]
where $\varphi(t)$ is a nonnegative $C^{\infty}$ function defined as
\[
\varphi(t)=\begin{cases}0, &t\geq 2,\\
	1, &0\leq t\leq1.
\end{cases}
\]
By \eqref{dfdp} and \eqref{df1}, $|\nabla F|\,dx\, dy$ is a vanishing Carleson measure.

Indeed, by computing the gradient in polar coordinates and applying the chain rule, we have
\begin{align}\label{grad}
    |\nabla F| = \sqrt{\left(\tfrac{\partial F}{\partial \rho}\right)^2 + \left(\tfrac{1}{\rho}\tfrac{\partial F}{\partial \theta}\right)^2}.
\end{align}
For sufficiently small $h$, the factor $1/\rho$ is bounded above by $2$, so that
\[
    |\nabla F| \leq 2 \sqrt{\left(\tfrac{\partial F}{\partial \rho}\right)^2 + \left(\tfrac{\partial F}{\partial \theta}\right)^2} 
    \leq 2\left(\left|\tfrac{\partial F}{\partial \rho}\right| + \left|\tfrac{\partial F}{\partial \theta}\right|\right).
\]
Therefore, the function $F$ satisfies the required conditions, which completes the proof of $V_2 \Rightarrow V_3$.

\section{Proof of $V_3\Rightarrow V_1$}\label{V3toV1}
Assume that $F(x, y) = F(re^{i\theta}) \in C^\infty(\Delta)$ satisfies conditions $(\mathrm{i})$ and $(\mathrm{ii})$ of $(V_3)$. Since the space of continuous functions $C(\mathbb{T})$ is contained in $\mathrm{VMO}(\mathbb{T})$, it suffices to show that the boundary value of $F$, still denoted by $f$, belongs to $\mathrm{VMO}(\mathbb{T})$.

According to \cite[Theorem 1.1.2]{varopoulos1977bmo}, $f\in\bmo(\ttt)$. It remains to show that
 \[\lim _{|I| \rightarrow 0} \int_{I}\left|f(z)-f_{I}\right||d z|=0.\]
Let $I$ be any closed subinterval on $\mathbb{T}$ and consider the Carleson squares $\tilde{I}$ with $|I|=h$ ($0<h<1/10$) defined as before,
\[\tilde{I}=\{r \zeta: \zeta\in I,\ 1-h/2\pi\leq r\leq 1\}.\]
By \eqref{grad}, $|\nabla F|(x,y)$ is comparable to \[ \sqrt{\left|\tfrac{\partial F}{\partial r}\right|^2+\left|\tfrac{\partial F}{\partial \theta}\right|^2} \eqqcolon |\nabla_{r,\theta}F|(re^{i\theta}),\] so that
\[\int_{\tilde{I}}|\nabla F|(x,y)\,dx\, dy\asymp\int_{\tilde{I}}|\nabla_{r,\theta}F|(re^{i\theta})\,dr\, d\theta\leq c_h h.\]
By Fubini's theorem, there exists $r_0\in [1-h/2\pi,1]$ such that 
\begin{align}\label{inequ:ch}
	  \int_{I}|\nabla_{r,\theta}F|(r_0e^{i\theta})\, d\theta\leq c_h.
\end{align}
Otherwise, for all $1-h/2\pi\leq r\leq 1$, we have
\[\int_{I}|\nabla_{r,\theta}F|(re^{i\theta})\, d\theta> c_h,\]
which implies
\[\int_{\tilde{I}}|\nabla_{r,\theta}F|(re^{i\theta})dr\, d\theta> c_h h.\]
This contradicts the fact that $|\nabla_{r,\theta}F|(re^{i\theta})\,dr\, d\theta$ is a Carleson measure. 

Consequently, it follows from \eqref{inequ:ch} that
\begin{align}\label{11}
	|F(r_0e^{i\theta})-F(r_0e^{i\theta_0})|\lesssim c_h, \ \forall\ e^{i\theta}\in I,
\end{align}
and by direct computation,
\begin{align}\label{22}
	\left|\lim_{r\to 1}F(re^{i\theta})-F(r_0e^{i\theta})\right|\leq \int_{1-r_0}^{1}|\nabla_{r,\theta}F|(re^{i\theta})dr.
\end{align}
Combining with \eqref{11} and \eqref{22}, we conclude that
\begin{align*}
	\left|f(e^{i\theta})-F(r_0e^{i\theta_0})\right|\lesssim c_h+\int_{1-\frac{h}{2\pi}}^{1}|\nabla_{r,\theta}F|(re^{i\theta})dr,
\end{align*}
so we obtain that
\begin{align*}
	\int_{I}\left|f(e^{i\theta})-F(r_0e^{i\theta_0})\right|\, d\theta\lesssim  c_h h+\int_{\tilde{I}}|\nabla_{r,\theta}F|(re^{i\theta})dr\, d\theta\lesssim  c_h h,
\end{align*}
which implies $f\in \vmo(\ttt)$, completing the proof of Theorem \ref{vmo}.

\begin{remark}
   It is worth noting that the above argument remains valid if we weaken the assumption on $F$ from smooth to continuously differentiable.
\end{remark}
\medskip

Parallel to the study of bounded mean oscillation, the concept of \emph{bounded lower oscillation} (BLO) was also introduced. A function $f \in \mathrm{BLO}$ if $
\sup_{I}(f_{I} - \inf_{I}f) < \infty$. Coifman and Rochberg \cite{coifman1980another} proved that $\mathrm{BMO}=\mathrm{BLO}-\mathrm{BLO}$. The vanishing analogue, \emph{vanishing lower oscillation} (VLO), is defined by $\lim_{|I|\to 0}(f_{I} - \inf_{I}f)=0$.
Korey \cite{MR1840427} subsequently solved a question posed by Coifman and Rochberg \cite{coifman1980another}, proving the decomposition $\mathrm{VMO}=\mathrm{VLO}-\mathrm{VLO}$. We use $(V_2)$ to reestablish this result.

\begin{corollary}\label{VLO}
    Every $\vmo$ function on $\mathbb{T}$ can be represented as the difference of two $\mathrm{VLO}$ functions. 
\end{corollary}
\begin{proof}
    Let $f\in \mathrm{VMO}(\mathbb{T})$. Then there exists $g\in C(\mathbb{T})$ and a vanishing Carleson measure $\mu$ such that $f=g+S\mu$. By the definition of $\mathrm{VLO}$, we have $g\in \mathrm{VLO}$. Write $\mu=\mu^{+}-\mu^{-}$ as the difference of two positive measures. To obtain $\mathrm{VMO}=\mathrm{VLO}-\mathrm{VLO}$, it suffices to show that $S\mu\in\mathrm{VLO}$ whenever $\mu$ is a positive vanishing Carleson measure.

    For every $0<h\leq 2\pi$, let $I_0$ be any fixed closed interval on $\mathbb{T}$ with $|I_0|=h$. Let $I_k$ be the interval concentric with $I_0$ of length $2^kh$, where $0\leq k\leq N_0$, $N_0$ is the unique natural number such that $2^{N_0}< 2\pi\leq 2^{N_0+1}$. Define $I_{N_0+1}$ as $\mathbb{T}$.

For $e^{i\theta},e^{i\theta_0}\in I_0$ and $z\in\tilde{I}_{k}\setminus\tilde{I}_{k-1}$, $2\leq k\leq N_0+1$, where $\tilde{I}_{k}$ are Carleson squares defined before. See Fig.~\ref{tilde I}. 
A simple geometric argument (see \cite{girela2001analytic,garnett2007bounded})
shows that there exists a constant $C>0$, independent of $I_0$, such that
\begin{align}\label{pz}
	|P_{z}(e^{i\theta})-P_{z}(e^{i\theta_0})|=\left|\frac{1-|z|^2}{|e^{i\theta}-z|^2}-\frac{1-|z|^2}{|e^{i\theta_0}-z|^2}\right|\leq \frac{C}{2^{2k}|I_0|}.
\end{align}
 
We divide the unit disk into a collection of distinct Carleson squares, thereby obtaining
{\small \begin{align*}
S\mu(e^{i\theta})&=\int_{\Delta}P_{z}(e^{i\theta})\, d\mu(z)=\int_{\tilde{I}_{1}}P_{z}(e^{i\theta})\, d\mu(z)+\sum_{k=2}^{N_0+1}\int_{\tilde{I}_{k}\setminus\tilde{I}_{k-1}} P_{z}(e^{i\theta})\, d\mu(z).
\end{align*}}
Since $S\mu$ is continuous on $\mathbb{T}$, there exists
$\theta_0\in I_0$ such that
\[
S\mu(e^{i\theta_0})=\inf_{I_0} S\mu .
\]
     
By direct computation, we obtain
\begin{align*}
&\frac{1}{|I_0|}\int_{I_0}S\mu(e^{i\theta})d\theta-S\mu(e^{i\theta_0})\\	
	\leq &\frac{1}{|I_0|}\int_{I_0}\left(\int_{\widetilde{I}_1}P_{z}(e^{i\theta})d\mu(z)\right)d\theta+\frac{1}{|I_0|}\int_{I_0}\left(\int_{\widetilde{I}_1}P_{z}(e^{i\theta_0})d\mu(z)\right)d\theta\\
	&+\sum_{k=2}^{N_0+1}\frac{1}{|I_0|}\int_{I_0}\left(\int_{\widetilde{I}_{k}\backslash\widetilde{I}_{k-1}}\left|P_{z}(e^{i\theta})-P_{z}(e^{i\theta_0})\right|d\mu(z)\right)d\theta\\
    \coloneqq& A_{1,1}+A_{1,2}+A_2.
\end{align*}
By Fubini's theorem, exchanging the order of integration yields that the first two terms are bounded by $4c_{2h}$. For the third term, we invoke the estimate \eqref{pz}, which gives
\begin{align}\label{vlo_est_3}
A_2\leq \sum_{k=2}^{N_0+1}\int_{\tilde{I}_{k}\setminus\tilde{I}_{k-1}}
	\frac{C}{2^{2k}|I_0|}\, d\mu(z)
	\leq C\sum_{k=2}^{N_0+1}\frac{\mu(\tilde{I}_k)}{2^{2k}|I_0|}
	\leq C\sum_{k=2}^{N_0+1}\frac{c_{2^kh}}{2^{k}}.
\end{align}
Using the same argument as in the proof of $V_2\Rightarrow V_3$, we obtain
\[
\sum_{k=2}^{N_0+1}\frac{c_{2^kh}}{2^{k}}\to 0
\]
uniformly as $h\to 0$. Hence $S\mu\in \mathrm{VLO}$.
\end{proof}

\bibliography{vv}
% Text of article.

% Bibliographies can be prepared with BibTeX using amsplain,
% amsalpha, or (for "historical" overviews) natbib style.
\bibliographystyle{amsplain}
% Insert the bibliography data here.

\end{document}